\newtheorem{thm}{Theorem}[section]
\newtheorem{lem}[thm]{Lemma}
\newtheorem{coro}[thm]{Corollary}
\newtheorem{prop}[thm]{Proposition}
\newtheorem*{CIT}{Cauchy Interlacing Theorem}
\newtheorem*{IP}{Inclusion Principle}
\theoremstyle{definition}
\newtheorem{rem}[thm]{Remark}
\def\diag{{\rm diag}}
\def\H{\mathcal{H}}
\def\la{\lambda}
\def\intrz{\lessdot}
\def\comp{\bowtie}
\def\ol{\overline}
\def\L{\mathcal{L}}
\def\n{\mathfrak{n}}
\newcommand{\w}[1]{\widetilde{#1}}
\journal{Linear and Multilinear Algebra}
\begin{document}

\begin{frontmatter}

\title{Inertia indices and eigenvalue inequalities for Hermitian matrices}

\author[a]{Sai-Nan Zheng}
\ead{zhengsainandlut@hotmail.com}
\author[b]{Xi Chen}
\ead{chenxi@dlut.edu.cn}
\author[c]{Lily Li Liu}
\ead{liulily@qfnu.edu.cn}
\author[b]{Yi Wang}
\ead{wangyi@dlut.edu.cn}

\address[a]{School of Mathematics, Dongbei University of Finance and Economics, Dalian 116025, P.R. China}
\address[b]{School of Mathematical Sciences, Dalian University of Technology, Dalian 116024, P.R. China}
\address[c]{School of Mathematical Sciences, Qufu Normal University, Qufu 273165, P.R. China}
\begin{abstract}
We present a characterization of eigenvalue inequalities between two Hermitian matrices
by means of inertia indices.
As applications,
we deal with some classical eigenvalue inequalities for Hermitian matrices,
including the Cauchy interlacing theorem and the Weyl inequality,
in a simple and unified approach.
We also give a common generalization of eigenvalue inequalities for
(Hermitian) normalized Laplacian matrices of simple (signed, weighted, directed) graphs.
Our approach is also suitable for Hermitian matrices of the second kind of digraphs
recently introduced by Mohar.
\end{abstract}

\begin{keyword}
Hermitian matrix\sep
inertia index\sep
eigenvalue\sep
(Hermitian) normalized Laplacian

\end{keyword}

\end{frontmatter}

\section{Introduction}
A univariate polynomial is {\it real-rooted} if all of its coefficients and roots are real.
Let $f$ be a real-rooted polynomial of degree $n$.
Denote its roots by
$r_1(f)\ge r_2(f)\ge\cdots\ge r_n(f)$.
For convenience we set that $r_i(f)=+\infty$ for $i<1$ and $r_i(f)=-\infty$ for $i>n$.
Let $f$ and $g$ be two real-rooted polynomials of degree $n$ and $m$ respectively.
We say that $f$ {\it interlaces} $g$, denoted by $f\intrz g$,
if $n\le m\le n+1$ and $r_{i}(g)\ge r_{i}(f)\ge r_{i+1}(g)$ for all $i$.
We say that $f(x)$ and $g(x)$ are {\it compatible},
denoted by $f\comp g$,
if $|m-n|\le 1$ and $r_{i-1}(g)\ge r_{i}(f)\ge r_{i+1}(g)$ for all $i$.
Chudnovsky and Seymour \cite{CS07} introduced the concept of compatible polynomials and
showed that the independence polynomial of a claw-free graph has only real roots.
It is easy to see that $f\comp g$ is equivalent to $g\comp f$
and that $f\intrz g$ implies $f\comp g$.
We refer the reader to
\cite{CS07,Fis05,Liu12,LW07,MSS15,MSS15-2,MSS18,WY05,WZ19}
for further information about interlacing and compatible polynomials.

Let $A$ and $B$ be two Hermitian matrices.
We will use the notation $A\intrz B$ and $A\comp B$ if their characteristic polynomials
$\det(\la I-A)\intrz \det(\la I-B)$ and $\det(\la I-A)\comp \det(\la I-B)$ respectively.
In matrix analysis and spectral graph theory,
one frequently encounters Hermitian matrices whose characteristic polynomials are interlacing or compatible.
Throughout this paper we use $\H_n$ to denote the set of $n\times n$ Hermitian matrices.
For a Hermitian matrix $H\in\H_n$,
we arrange its eigenvalues in a nonincreasing order:
$\la_1(H)\ge\la_2(H)\ge\cdots\ge\la_n(H)$.
Let $n_+(H)$ ($n_-(H)$, resp.) denote
the positive (negative, resp.) inertia index,
i.e.,
the number of positive (negative, resp.) eigenvalues of $H$.
In this paper
we present the following characterization of eigenvalue inequalities between two Hermitian matrices
by means of inertia indices
and then apply it to investigate when the characteristic polynomials of Hermitian matrices interlace or are compatible.

\begin{thm}\label{cri-thm}
Let $A,B$ be two Hermitian matrices and $m\in\mathbb{Z}$. Then
$\la_{i+m}(B)\le\la_{i}(A)$ for all $i$ if and only if
$n_+(B-rI)-n_+(A-rI)\le m$ for every $r\in\mathbb{R}$.
\end{thm}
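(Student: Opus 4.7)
The plan is to exploit a single bookkeeping identity: for any Hermitian matrix $H\in\H_n$ and any real $r$, the positive inertia index satisfies
\[
n_+(H-rI)=|\{i:\la_i(H)>r\}|,
\]
i.e., it counts exactly the eigenvalues of $H$ strictly greater than $r$. With this translation in hand, both sides of the theorem become comparisons of the eigenvalue counting functions of $A$ and $B$, and the equivalence should fall out by inverting one statement into the other.

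For the forward direction ($\Rightarrow$), I would fix $r\in\mathbb{R}$ and set $k:=n_+(A-rI)$. The counting identity (together with the convention $\la_j(A)=-\infty$ for $j$ exceeding the size of $A$) gives $\la_{k+1}(A)\le r$. Applying the hypothesis $\la_{i+m}(B)\le\la_i(A)$ at $i=k+1$ yields $\la_{k+m+1}(B)\le r$, from which $n_+(B-rI)\le k+m=n_+(A-rI)+m$ drops out immediately.

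For the backward direction ($\Leftarrow$), I would fix an integer $i$ and split into cases. In the main case where $\la_i(A)$ is a finite eigenvalue, set $r:=\la_i(A)$; since fewer than $i$ eigenvalues of $A$ can strictly exceed $\la_i(A)$, we obtain $n_+(A-rI)\le i-1$, so the hypothesis gives $n_+(B-rI)\le i+m-1$, and the counting identity forces $\la_{i+m}(B)\le r=\la_i(A)$. The boundary case $i\le 0$ is trivial since $\la_i(A)=+\infty$. The remaining boundary case, where $i$ exceeds the size of $A$, requires $\la_{i+m}(B)=-\infty$; this will follow after noting that the hypothesis with $r\to-\infty$ forces $m$ to be at least the difference between the sizes of $B$ and $A$.

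The main obstacle is purely bookkeeping: one must ensure that the $\pm\infty$ conventions for eigenvalue indexing are consistently respected at all the boundaries. The key observation is that the right-hand hypothesis forces $m\ge 0$ (take $r$ very large, so both inertia indices vanish) and also forces a lower bound on $m$ in terms of the matrix sizes (take $r$ very small, so both inertia indices reach the matrix sizes); these two bounds together make the boundary cases trivial. Beyond this, no deep spectral machinery (such as the Courant--Fischer min-max principle) is needed; the argument is essentially a one-step combinatorial inversion once the counting identity is in place.
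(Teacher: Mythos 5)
Your proof is correct and follows essentially the same route as the paper: the forward direction is the paper's argument almost verbatim (set $k=n_+(A-rI)$, shift the index by $m$, and read off $n_+(B-rI)\le k+m$), while your backward direction, phrased directly by taking $r=\la_i(A)$ instead of the paper's contrapositive with $r_0$ chosen strictly between $\la_{i_0}(A)$ and $\la_{i_0+m}(B)$, rests on the same eigenvalue-counting interpretation of the inertia index. Your explicit handling of the $\pm\infty$ conventions and of matrices of different sizes (noting that the hypothesis forces $m\ge 0$ and $m\ge n_B-n_A$) is a careful tightening of the bookkeeping, not a different method.
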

\begin{proof}
Assume that $\la_{i+m}(B)\le\la_{i}(A)$ for all $i$.
Then for arbitrary $r\in\mathbb{R}$,
\begin{equation}\label{i,i+m}
\la_{i+m}(B-rI)=\la_{i+m}(B)-r\le\la_i(A)-r=\la_i(A-rI).
\end{equation}
Let $n_+(A-rI)=p$.
Then $\la_{p+1}(A-rI)\le0$, and so $\la_{p+m+1}(B-rI)\le0$ by \eqref{i,i+m}.
It follows that $n_+(B-rI)\le p+m$, i.e.,
$n_+(B-rI)-n_+(A-rI)\le m$.

Conversely, assume that there exists an index $i_0$ such that $\la_{{i_0}+m}(B)>\la_{i_0}(A)$.
Take $r_0\in(\la_{i_0}(A),\la_{{i_0}+m}(B))$.
Then $\la_{i_0}(A-r_0I)<0$ and $\la_{{i_0}+m}(B-r_0I)>0$.
Thus $n_+(A-r_0I)\le i_0-1$ and $n_+(B-r_0I)\ge i_0+m$,
which implies that $n_+(B-r_0I)-n_+(A-r_0I)\ge m+1$.
In other words,
if $n_+(B-rI)-n_+(A-rI)\le m$ for every $r\in\mathbb{R}$,
then $\la_{i+m}(B)\le\la_{i}(A)$ for all $i$.
This completes the proof of the theorem.
\end{proof}

\begin{rem}\label{fin}
Clearly, both $n_+(A-rI)$ and $n_+(B-rI)$ are right continuous staircase functions of $r\in\mathbb{R}$,
so is their difference $n_+(A-rI)-n_+(B-rI)$.
Thus, for arbitrary real number $r_0$,
we have $n_+(A-rI)-n_+(B-rI)\equiv n_+(A-r_0I)-n_+(B-r_0I)$
in a certain right neighbourhood of $r_0$.
As a consequence,
it is impossible that an inequality for $n_+(A-rI)-n_+(B-rI)$ is false
only for finite many real numbers $r$.
\end{rem}

\begin{rem}\label{rem-mat}
The following are two particularly interesting special cases of Theorem \ref{cri-thm}.
\begin{enumerate}
  \item[\rm (i)] $A\intrz B$ if and only if $0\le n_+(B-rI)-n_+(A-rI)\le 1$ for every $r\in\mathbb{R}$.
  \item[\rm (ii)] $A\comp B$ if and only if $|n_+(B-rI)-n_+(A-rI)|\le 1$ for every $r\in\mathbb{R}$.
\end{enumerate}
\end{rem}

In \S 2 we use Theorem \ref{cri-thm} to
deal with some classical eigenvalue inequalities for Hermitian matrices,
including the Cauchy interlacing theorem and the Weyl inequality.
Various Hermitian matrices are often involved in spectral graph theory,
such as (Hermitian) adjacency matrices and normalized Laplacian matrices of simple graphs and digraphs.
In \S 3 we use Theorem \ref{cri-thm} to give a common generalization of eigenvalue inequalities for
(Hermitian) normalized Laplacian matrices of simple (signed, weighted, directed) graphs.

\section{Matrix ananlysis}

We first use Theorem \ref{cri-thm} to give a simple proof of the following result,
which is known as the inclusion principle \cite[Theorem 4.3.28]{HJ13}.

\begin{IP}
Let
$A=
\left(
  \begin{array}{cc}
    B_{k\times k} & C \\
    C^* & D \\
  \end{array}
\right)\in\H_n$.
Then $\la_{n-k+i}(A)\le\la_i(B)\le\la_i(A)$.
\end{IP}
\begin{proof}
Let $r$ be a real number such that $\det (B-rI)\neq 0$.
Then
$$A-rI=
\left(
  \begin{array}{cc}
    B-rI & C \\
    C^* & D-rI \\
  \end{array}
\right)
\cong
\left(
  \begin{array}{cc}
    B-rI & 0 \\
    0 & D-rI-C^*(B-rI)^{-1}C \\
  \end{array}
\right),$$
where $\cong$ denotes the congruence of matrices,
and so
\begin{equation}\label{n-AB}
  n_+(B-rI)\le n_+(A-rI)\le n_+(B-rI)+n-k.
\end{equation}
Clearly, there are only finite many real numbers $r$ such that $\det (B-rI)=0$.
Hence \eqref{n-AB} holds for all $r\in\mathbb{R}$.
Thus $\la_{n-k+i}(A)\le\la_i(B)\le\la_i(A)$ by Theorem \ref{cri-thm}.
\end{proof}

The Cauchy interlacing theorem is a special case of the inclusion principle.

\begin{CIT}
Let
$A=
\left(
  \begin{array}{ll}
    B & \alpha \\
    \alpha^* & a \\
  \end{array}
\right)\in\H_n$,
where $\alpha\in\mathbb{C}^{n-1}$ and $a\in\mathbb{R}$.
Then
$B\intrz A$.
\end{CIT}

The following result is an immediate consequence of Theorem \ref{cri-thm} and the inclusion principle,
which can also be proved by a direct argument
(we leave the proof to the interested reader).

\begin{prop}\label{pii+}
Let $A,B\in\H_n$.
Then $n_+(A+B)\le n_+(A)+n_+(B)$.
\end{prop}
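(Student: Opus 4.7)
The plan is to realize $A+B$ as the top-left principal block of a Hermitian matrix $M\in\H_{2n}$ whose positive inertia is exactly $n_+(A)+n_+(B)$, and then read off the inequality from the inclusion principle (combined with Theorem \ref{cri-thm} to convert the resulting eigenvalue inequality into a statement about inertia indices).

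First I would introduce the $2n\times 2n$ block matrix
$$M=\begin{pmatrix} A+B & B \\ B & B \end{pmatrix}\in\H_{2n}.$$
A single Gaussian-type congruence (subtract the second block row from the first and perform the symmetric column operation) gives
$$M\cong\begin{pmatrix} A & 0 \\ 0 & B \end{pmatrix},$$
so by Sylvester's law of inertia $n_+(M)=n_+(A)+n_+(B)$.

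Next I would apply the inclusion principle to the top-left block $A+B$ of $M$, obtaining $\la_i(A+B)\le\la_i(M)$ for $i=1,\ldots,n$; treating $\la_i(A+B)$ as $-\infty$ for $i>n$, this holds for all $i$. Theorem \ref{cri-thm} (with $m=0$) then translates this into $n_+((A+B)-rI)-n_+(M-rI)\le 0$ for every $r\in\mathbb{R}$; specializing to $r=0$ yields $n_+(A+B)\le n_+(M)=n_+(A)+n_+(B)$.

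There is essentially no obstacle here; the only choice to be made is that of the block matrix $M$, which must simultaneously contain $A+B$ as a principal block and be congruent to $\diag(A,B)$. The construction above is one convenient option (another equally workable choice being $\left(\begin{smallmatrix} A & -A \\ -A & A+B\end{smallmatrix}\right)$, with $A+B$ sitting in the bottom-right block), and given such an $M$ the rest of the argument is a mechanical application of the two results quoted in the hint.
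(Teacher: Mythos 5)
Your proof is correct and is essentially the paper's own argument: the paper uses exactly the block matrix $\left(\begin{smallmatrix} A+B & B \\ B & B\end{smallmatrix}\right)$, its congruence with $\left(\begin{smallmatrix} A & O \\ O & B\end{smallmatrix}\right)$, and the inertia inequality \eqref{n-AB} from the proof of the inclusion principle. The only cosmetic difference is that you pass through the eigenvalue form of the inclusion principle and convert back to inertia indices via Theorem \ref{cri-thm} with $m=0$, whereas the paper quotes the inertia-index inequality \eqref{n-AB} directly at $r=0$.
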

\begin{proof}
The statement follows from
$\left(
   \begin{array}{cc}
     A & O \\
     O & B \\
   \end{array}
 \right)
\cong
\left(
   \begin{array}{cc}
     A+B & B \\
     B & B \\
   \end{array}
 \right)$
and \eqref{n-AB}.
\end{proof}

\begin{prop}\label{pii-}
Let $A,B\in\H_n$ and $m\in\mathbb{Z}$.
\begin{enumerate}
  \item[\rm (i)] If $n_+(B)\le m$, then $\la_{i+m}(A+B)\le\la_{i}(A)$ for all $i$. 
  \item[\rm (ii)] If $n_-(B)\le m$, then $\la_{i+m}(A)\le\la_{i}(A+B)$ for all $i$.
\end{enumerate}
\end{prop}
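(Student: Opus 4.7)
The plan is to combine Theorem \ref{cri-thm}, which translates eigenvalue inequalities into statements about differences of positive inertia indices, with Proposition \ref{pii+}, which is the subadditivity of $n_+$ under matrix addition. Neither part of the proposition requires anything beyond these two tools; the whole argument is an exercise in choosing the right roles for $A$ and $B$ in the criterion.

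For part (i), I would apply Theorem \ref{cri-thm} with its $A$ taken to be our $A$ and its $B$ taken to be our $A+B$. The criterion then reduces the target inequality $\la_{i+m}(A+B)\le\la_{i}(A)$ (for all $i$) to the single statement
\[
n_+\bigl((A+B)-rI\bigr)-n_+(A-rI)\le m \quad\text{for every } r\in\mathbb{R}.
\]
Writing $(A+B)-rI=(A-rI)+B$ and invoking Proposition \ref{pii+} gives $n_+((A-rI)+B)\le n_+(A-rI)+n_+(B)$, and then the hypothesis $n_+(B)\le m$ yields exactly the required bound.

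For part (ii), I would proceed symmetrically: apply Theorem \ref{cri-thm} with $A+B$ playing the role of its $A$ and $A$ playing the role of its $B$. The criterion now converts $\la_{i+m}(A)\le\la_{i}(A+B)$ into the requirement $n_+(A-rI)-n_+((A+B)-rI)\le m$ for every $r\in\mathbb{R}$. The key trick is to rewrite $A-rI=\bigl((A+B)-rI\bigr)+(-B)$ and apply Proposition \ref{pii+} to this decomposition, giving $n_+(A-rI)\le n_+((A+B)-rI)+n_+(-B)$. Using the elementary identity $n_+(-B)=n_-(B)$ and the hypothesis $n_-(B)\le m$ closes the argument.

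There is no real obstacle; both parts are one-line consequences of the two earlier results once the substitutions are made. The only point that demands a moment of care is the sign flip in part (ii): one must notice that the subadditivity of $n_+$ applied to $(A+B-rI)+(-B)$, combined with $n_+(-B)=n_-(B)$, is precisely what converts the hypothesis on $n_-(B)$ into a bound in the direction required by Theorem \ref{cri-thm}.
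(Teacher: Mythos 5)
Your proposal is correct and follows essentially the same route as the paper: part (i) is exactly the paper's argument (Theorem \ref{cri-thm} applied to $A$ and $A+B$ plus the subadditivity $n_+(A+B-rI)\le n_+(A-rI)+n_+(B)$ from Proposition \ref{pii+}), and your direct treatment of part (ii) via the decomposition $A-rI=(A+B-rI)+(-B)$ and $n_+(-B)=n_-(B)$ just unpacks the paper's shorter remark that (ii) follows from (i) applied to the pair $(A+B,-B)$.
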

\begin{proof}
For every $r\in\mathbb{R}$,
we have
$n_+(A+B-rI)-n_+(A-rI)\le n_+(B)$
by Proposition \ref{pii+}.
Thus (i) follows from Theorem \ref{cri-thm}.
And (ii) follows from (i) by noting that $n_+(-B)=n_-(B)$.
\end{proof}

\begin{coro}[Monotonicity Theorem]\label{mt}
Suppose that $A, B\in\H_n$ and $B$ is positive semi-definite.
Then $\la_i(A)\le\la_i(A+B)$.
\end{coro}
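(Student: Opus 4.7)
The plan is to read this off immediately from Proposition \ref{pii-}(ii). The hypothesis that $B$ is positive semi-definite means precisely that every eigenvalue of $B$ is nonnegative, so $n_-(B) = 0$. Applying Proposition \ref{pii-}(ii) with $m = 0$ then yields $\la_i(A) \le \la_i(A+B)$ for all $i$, which is exactly the conclusion.

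If one prefers to unwind the argument back to Theorem \ref{cri-thm} directly, the relevant verification is that $n_+(A - rI) - n_+(A+B-rI) \le 0$ for every $r \in \mathbb{R}$. This in turn reduces to the statement $n_+(A-rI) \le n_+\bigl((A-rI) + B\bigr)$, which follows from Proposition \ref{pii+} applied to $A-rI$ and $-B$ (noting $n_+(-B) = n_-(B) = 0$), or equivalently from the fact that adding a positive semi-definite matrix to any Hermitian matrix cannot decrease the number of positive eigenvalues.

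Since both routes are a one-line unpacking of results already established in the excerpt, there is no genuine obstacle; the content of the corollary is simply the observation that the \emph{sign} information ``$B \succeq 0$'' is equivalent to the \emph{cardinality} information ``$n_-(B) = 0$'', which is precisely the currency in which Proposition \ref{pii-} is phrased.
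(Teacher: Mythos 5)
Your proposal is correct and is exactly the paper's intended argument: the corollary is stated as an immediate consequence of Proposition~\ref{pii-}~(ii) with $m=0$, using $n_-(B)=0$ for positive semi-definite $B$. The alternative unwinding through Theorem~\ref{cri-thm} and Proposition~\ref{pii+} is also sound (applying Proposition~\ref{pii+} to the decomposition $A-rI=(A+B-rI)+(-B)$), but it adds nothing beyond the one-line reduction.
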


\begin{coro}
\label{weyl}
Let $A, B\in\H_n$.
If $n_+(B)\le p$ and $n_-(B)\le q$,
then
$$\la_{i+q}(A)\le\la_i(A+B)\le\la_{i-p}(A).$$
\end{coro}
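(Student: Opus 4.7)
The plan is to obtain both halves of the inequality as immediate consequences of Proposition~\ref{pii-}, with no new ingredient required beyond a relabeling of indices. Parts (i) and (ii) of that proposition already encapsulate exactly the interaction between an inertia bound on $B$ and a one-sided eigenvalue shift of $A+B$ relative to $A$; what remains is to invoke each part once and chain the resulting bounds.

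For the upper bound I will apply Proposition~\ref{pii-}(i) with $m=p$: the hypothesis $n_+(B)\le p$ gives $\la_{j+p}(A+B)\le\la_j(A)$ for every $j$, and substituting $j=i-p$ turns this into $\la_i(A+B)\le\la_{i-p}(A)$. For the lower bound I will apply Proposition~\ref{pii-}(ii) with $m=q$: the hypothesis $n_-(B)\le q$ gives $\la_{j+q}(A)\le\la_j(A+B)$ for every $j$, and taking $j=i$ yields $\la_{i+q}(A)\le\la_i(A+B)$. Concatenating the two bounds produces the claimed chain $\la_{i+q}(A)\le\la_i(A+B)\le\la_{i-p}(A)$.

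I do not foresee any substantive obstacle: the corollary is essentially a packaging of Proposition~\ref{pii-} into the classical Weyl-type form. The only point that requires any care is the convention for eigenvalues with out-of-range subscripts, i.e.\ treating $\la_j(H)$ as $+\infty$ for $j<1$ and $-\infty$ for $j>n$. This is the same convention already implicit in the statement of Theorem~\ref{cri-thm} and used throughout the paper, and under it the two inequalities become vacuously true precisely in the regimes $i-p<1$ and $i+q>n$, which is exactly what is needed.
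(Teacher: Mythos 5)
Your derivation is correct and is exactly the intended one: the paper states Corollary~\ref{weyl} without proof immediately after Proposition~\ref{pii-}, since it follows by applying part (i) with $m=p$ (reindexed via $j=i-p$) and part (ii) with $m=q$, precisely as you do. The remark about the $\pm\infty$ convention for out-of-range indices is also the right way to handle the boundary cases.
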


\begin{coro}[Weyl Inequality]
Let $A, B\in\H_n$.
Then
$\la_{i+j-1}(A+B)\le\la_i(A)+\la_j(B)$.
\end{coro}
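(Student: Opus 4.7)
The plan is to deduce the Weyl inequality from Corollary \ref{weyl} by a suitable translation of $B$. Corollary \ref{weyl} already controls $\lambda_i(A+B)$ in terms of $\lambda_{i-p}(A)$ whenever $n_+(B)\le p$, so the only thing missing is a way to read the index $j$ in Weyl's inequality as a bound on the number of positive eigenvalues of a related matrix.

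The key observation is that if we set $\mu=\lambda_j(B)$ and replace $B$ by $B'=B-\mu I$, then $\lambda_j(B')=0$, and hence at most $j-1$ eigenvalues of $B'$ are (strictly) positive. In other words $n_+(B')\le j-1$. This is the entire content of the translation trick.

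With that in hand, I would apply Corollary \ref{weyl} to the pair $(A,B')$ with $p=j-1$ (the value of $q$ is irrelevant for the upper estimate), obtaining
\begin{equation*}
\lambda_i(A+B')\le\lambda_{i-(j-1)}(A).
\end{equation*}
Since $A+B'=A+B-\mu I$, the left-hand side equals $\lambda_i(A+B)-\lambda_j(B)$, so rearranging gives $\lambda_i(A+B)\le\lambda_{i-j+1}(A)+\lambda_j(B)$. Replacing $i$ by $i+j-1$ yields the Weyl inequality $\lambda_{i+j-1}(A+B)\le\lambda_i(A)+\lambda_j(B)$.

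There is no real obstacle: once one sees that Corollary \ref{weyl} is designed to trade an upper bound $p$ on $n_+(B)$ for a shift $-p$ in the index of $\lambda(A)$, Weyl's inequality is just the specialization obtained by translating $B$ so that its $j$-th eigenvalue becomes the threshold $0$. The only small care required is the index bookkeeping in the final substitution $i\mapsto i+j-1$.
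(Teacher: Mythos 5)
Your proof is correct and is essentially the paper's own argument: the paper also shifts $B$ by $\la_j(B)I$, notes $n_+(B-\la_j(B)I)\le j-1$, and applies Proposition~\ref{pii-} (of which the upper bound in Corollary~\ref{weyl} is just a re-indexed form). The only cosmetic difference is your citation of Corollary~\ref{weyl} instead of Proposition~\ref{pii-} and the final substitution $i\mapsto i+j-1$, which the paper builds into the index from the start.
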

\begin{proof}
Clearly, $n_+(B-\la_j(B)I)\le j-1$.
Thus $\la_{i+j-1}(A+B-\la_j(B)I)\le\la_i(A)$ by Proposition~\ref{pii-},
i.e., $\la_{i+j-1}(A+B)\le\la_i(A)+\la_j(B)$.
\end{proof}

Corollary \ref{weyl} is equivalent to the Weyl inequality
(see \cite{WZ19} for instance),
and is often more convenient to use.
A particular interesting special case of Corollary \ref{weyl} is the interlacing theorem
\cite[Corollary 4.3.9]{HJ13}
for a rank-one Hermitian perturbation of a Hermitian matrix.

\begin{coro}[Interlacing Theorem]
Let $A\in\H_n$ and $\alpha\in\mathbb{C}^n$.
Then $A\intrz (A+\alpha\alpha^*)$, i.e.,
$$\la_{i}(A)\le\la_i(A+\alpha\alpha^*)\le\la_{i-1}(A).$$
\end{coro}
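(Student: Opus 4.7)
The plan is to derive this directly as the rank-one specialization of Corollary \ref{weyl}. The matrix $B=\alpha\alpha^*$ is Hermitian, positive semi-definite, and has rank at most $1$, so its only possibly nonzero eigenvalue is $\|\alpha\|^2\ge 0$. Consequently $n_+(\alpha\alpha^*)\le 1$ and $n_-(\alpha\alpha^*)=0$.

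Applying Corollary \ref{weyl} with $B=\alpha\alpha^*$, $p=1$, and $q=0$ then immediately yields
$$\la_i(A)\le\la_i(A+\alpha\alpha^*)\le\la_{i-1}(A)$$
for all $i$, which is exactly the definition of $A\intrz (A+\alpha\alpha^*)$.

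The only step requiring any argument is the rank-one observation, which is standard: $\alpha\alpha^*$ sends every vector orthogonal to $\alpha$ to zero, and sends $\alpha$ to $\|\alpha\|^2\alpha$, so it has $n-1$ zero eigenvalues and one nonnegative eigenvalue. There is no real obstacle here; the work has already been done in Proposition \ref{pii-} and Corollary \ref{weyl}, and this corollary is essentially a sanity check that the framework of inertia indices recovers the classical rank-one interlacing statement in a single line.
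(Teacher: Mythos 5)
Your proof is correct and matches the paper's intent exactly: the paper presents this corollary precisely as the special case of Corollary \ref{weyl} with $B=\alpha\alpha^*$, $p=1$, $q=0$, giving no further argument beyond what you supplied. Nothing is missing.
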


Another useful special case of Corollary \ref{weyl}
is the following result about compatible polynomials.

\begin{coro}
\label{comp}
Let $A, B\in\H_n$.
If $n_+(B)=n_-(B)=1$,
then $A\comp (A+B)$, i.e.,
$$\la_{i+1}(A)\le\la_i(A+B)\le\la_{i-1}(A).$$
\end{coro}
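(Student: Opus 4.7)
The plan is to recognize that Corollary~\ref{comp} is an immediate specialization of Corollary~\ref{weyl}. Under the hypothesis $n_+(B) = n_-(B) = 1$, we can take $p = q = 1$ in Corollary~\ref{weyl}, which yields at once
\[
\la_{i+1}(A) \le \la_i(A+B) \le \la_{i-1}(A)
\]
for every index $i$, with the usual $\pm\infty$ conventions handling boundary indices $i = 1$ and $i = n$.

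Having established the double inequality, the remaining step is to verify that these inequalities are precisely the statement $A \comp (A+B)$ in the sense of compatibility introduced in \S 1. Writing $f = \det(\la I - A)$ and $g = \det(\la I - (A+B))$, the definition $f \comp g$ requires $r_{i-1}(g) \ge r_i(f) \ge r_{i+1}(g)$, i.e.\ $\la_{i-1}(A+B) \ge \la_i(A) \ge \la_{i+1}(A+B)$ for all $i$. Shifting the index by $\pm 1$ in the derived inequalities recovers exactly these two comparisons, so the two formulations are equivalent.

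No real obstacle is anticipated: the work has already been done in Proposition~\ref{pii-} and Corollary~\ref{weyl}, and the present statement is the $(p,q) = (1,1)$ case. If one preferred a self-contained argument that bypasses Corollary~\ref{weyl}, one could instead apply Theorem~\ref{cri-thm} directly after observing that $n_+(B) \le 1$ gives $n_+(A+B-rI) - n_+(A-rI) \le 1$ via Proposition~\ref{pii+}, while $n_-(B) \le 1$ applied to $-B$ gives the reverse bound $n_+(A-rI) - n_+(A+B-rI) \le 1$; combining these yields $|n_+(A+B-rI) - n_+(A-rI)| \le 1$ for every $r \in \mathbb{R}$, which is exactly the compatibility criterion of Remark~\ref{rem-mat}(ii).
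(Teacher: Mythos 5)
Your proposal is correct and matches the paper's treatment: the paper presents Corollary~\ref{comp} precisely as the $(p,q)=(1,1)$ specialization of Corollary~\ref{weyl}, with the index-shift translation into the compatibility definition being the only bookkeeping needed. Your alternative route via Proposition~\ref{pii+} and Remark~\ref{rem-mat}~(ii) is also sound, but it is just an unwinding of the same chain of results rather than a genuinely different argument.
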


The Cauchy interlacing theorem states that
a Hermitian matrix interlaces its bordered Hermitian matrix.
More generally,
suppose that $P,A\in\H_n$ and define
$$f(\la;P,A)=\det(\la P-A).$$
Then $f(\la;I,A)$ is precisely the characteristic polynomial of $A$.
For a real-rooted polynomial $f$,
let $n_+(f), n_-(f)$ and $n_0(f)$ denote the number of positive, negative and zero roots of $f$ respectively.
Call $(n_+(f),n_-(f), n_0(f))$ the {\it inertia index} of $f$.
Clearly, if $f$ is the characteristic polynomial of a Hermitian matrix $A$,
then the inertia index of $f$ coincides with that of $A$.
We have the following result.

\begin{coro}\label{A,B}
Suppose that $P,A\in\H_n$ and $P$ is positive definite. Then
\begin{enumerate}
  \item[\rm (i)] $f(\la;P,A)$ is a real-rooted polynomial in $\la$;
  \item[\rm (ii)] $f(\la;P,A)$ has the same inertia index as $A$;
  and
  \item[\rm (iii)] $f(\la;\ol{P},\ol{A})\intrz f(\la;P,A)$,
  where $\ol{P}$ (resp., $\ol{A}$) is the matrix obtained from $P$ (resp., $A$) by deleting the last row and column.
\end{enumerate}
\end{coro}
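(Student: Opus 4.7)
The plan is to reduce all three parts to ordinary characteristic polynomials via the Cholesky factorization $P = Q^*Q$, where $Q$ is upper triangular with positive diagonal entries (which exists since $P$ is positive definite). Set $H := (Q^*)^{-1}AQ^{-1}$, which is Hermitian because $A$ is. The congruence
\[
\la P - A = Q^*(\la I - H)Q
\]
yields $f(\la;P,A) = \det(P)\,\det(\la I - H)$, so the roots of $f(\la;P,A)$ coincide with the eigenvalues of the Hermitian matrix $H$; this proves (i). For (ii), the same identity written as $A = Q^*HQ$ is a Sylvester congruence, so $A$ and $H$ have the same inertia, and this is precisely the inertia of the roots of $f(\la;P,A)$.

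For (iii), the key point I would exploit is that with Cholesky (as opposed to the positive definite square root), $\ol{H}$ sits inside $H$ as its leading $(n-1)\times(n-1)$ principal submatrix. Writing $Q$ in $(n-1)+1$ block form with leading block $\ol{Q}$ (itself upper triangular), a short block computation---using that $Q^{-1}$ is upper triangular and $(Q^*)^{-1}$ is lower triangular---shows that the leading block of $(Q^*)^{-1}AQ^{-1}$ equals $(\ol{Q}^*)^{-1}\ol{A}\,\ol{Q}^{-1}$. On the other hand $\ol{P} = \ol{Q}^*\ol{Q}$ by the same block product, so $\ol{P}$ is positive definite and the reduction applied to the pair $(\ol{P},\ol{A})$ identifies this block as $\ol{H}$. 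The Cauchy interlacing theorem applied to $H$ and its principal submatrix $\ol{H}$ then gives $\ol{H}\intrz H$, which transfers to $f(\la;\ol{P},\ol{A})\intrz f(\la;P,A)$ via (i) applied to both pairs.

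The only subtle step is the triangular block bookkeeping that places $\ol{H}$ precisely at the leading block of $H$; this identification would fail if one used the positive definite square root $P^{1/2}$ instead. An alternative route avoiding this point is to apply the inclusion principle in the form of \eqref{n-AB} directly to the pair $A - rP$ and its leading $(n-1)\times(n-1)$ principal submatrix $\ol{A} - r\ol{P}$ at each real $r$, translate through the congruence $A - rP = Q^*(H - rI)Q$ to obtain $0 \le n_+(H - rI) - n_+(\ol{H} - rI) \le 1$ for every $r \in \mathbb{R}$, and conclude by Remark \ref{rem-mat}(i). Either way, the whole argument rests on the single congruence $\la P - A = Q^*(\la I - H)Q$.
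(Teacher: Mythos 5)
Your proof is correct, and for the key part (iii) it takes a genuinely different route from the paper. The paper also reduces via a congruence, but it uses the positive definite square root, setting $B=P^{-1/2}AP^{-1/2}$ and $C=\ol{P}^{-1/2}\ol{A}\,\ol{P}^{-1/2}$; since $C$ is then \emph{not} a submatrix of $B$, the paper cannot invoke Cauchy interlacing structurally and instead proves $C\intrz B$ through inertia counts: for every $r$ it writes $n_+(B-rI)=n_+(A-rP)$ and $n_+(C-rI)=n_+(\ol{A}-r\ol{P})$, applies the Cauchy interlacing theorem together with Remark \ref{rem-mat}(i) to the bordered pair $A-rP$, $\ol{A}-r\ol{P}$, and concludes again by Remark \ref{rem-mat}(i). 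Your Cholesky choice $P=Q^*Q$ sidesteps this detour: because $Q^{-1}$ is upper triangular, the leading block of $H=(Q^*)^{-1}AQ^{-1}$ is exactly $(\ol{Q}^*)^{-1}\ol{A}\,\ol{Q}^{-1}$ and $\ol{P}=\ol{Q}^*\ol{Q}$, so $\ol{H}$ sits inside $H$ as a principal submatrix and a single application of the Cauchy interlacing theorem finishes (iii); your observation that this identification fails for $P^{1/2}$ is precisely the obstruction the paper works around. What each approach buys: yours is more structural and avoids any shift-by-$r$ bookkeeping (and any concern about exceptional values of $r$), while the paper's inertia-count argument is the one that showcases Theorem \ref{cri-thm}/Remark \ref{rem-mat}, the unifying tool of the article, and generalizes more readily to situations where no submatrix structure is available. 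Your sketched alternative route (comparing $n_+(H-rI)$ and $n_+(\ol{H}-rI)$ for all $r$) is essentially the paper's proof; if you run it via \eqref{n-AB} rather than via the Cauchy interlacing theorem plus Remark \ref{rem-mat}(i), you should add the finitely-many-exceptional-$r$ remark (Remark \ref{fin}) to cover $\det(\ol{A}-r\ol{P})=0$, but that is a minor point and your primary argument does not need it. Parts (i) and (ii) are the same as in the paper up to replacing $P^{-1/2}$ by the Cholesky factor.
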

\begin{proof}
Let $B=P^{-1/2}AP^{-1/2}$.
Then $B$ is a Hermitian matrix and is congruent with $A$.
Moreover, $f(\la;P,A)=\det(\la P-A)
=\det(P)\det(\la I-B)$.
So $f(\la;P,A)$ has the same roots as the characteristic polynomial $\det(\la I-B)$ of $B$.
Thus (i) and (ii) follow.

Similarly, let $C=\ol{P}^{-1/2}\ol{A}~\ol{P}^{-1/2}$.
Then $f(\la;\ol{P},\ol{A})$ has the same roots as $\det(\la I-C)$.
So, to prove (iii), it suffices to prove that $C\intrz B$.
We prove it by Remark~\ref{rem-mat}~(i).
Let $r\in\mathbb{R}$.
Note that $B-rI=P^{-1/2}(A-rP)P^{-1/2}$.
Hence $n_+(B-rI)=n_+(A-rP)$.
Similarly, $n_+(C-rI)=n_+(\ol{A}-r\ol{P})$.
Since $\ol{A}-r\ol{P}$ is the $(n-1)\times (n-1)$ principal submatrix of $A-rP$,
we have $0\le n_+(A-rP)-n_+(\ol{A}-r\ol{P})\le 1$ by the Cauchy interlacing theorem and Remark~\ref{rem-mat}~(i).
Thus $0\le n_+(B-rI)-n_+(C-rI)\le 1$,
and so $C\intrz B$ again by Remark~\ref{rem-mat}~(i).
\end{proof}

\begin{rem}
Let $\ol{P}_i$ (resp., $\ol{A}_i$) denote the matrix obtained from $P$ (resp., $A$) by deleting the $i$-th row and column
and $f_i(\la)=f(\la;\ol{P}_i,\ol{A}_i)$.
Then each $f_i(\la)$ interlaces $f(\la;P,A)$.
It follows that $\sum_ic_if_i(\la)$ is real-rooted for all $c_i\ge 0$
(see \cite[Theorem 3.6]{CS07} for details).
\end{rem}

\begin{rem}
Corollary \ref{A,B} (i) and (iii) also hold when $P$ is positive semi-definite by a standard continuity argument.
We refer the reader to \cite{BB08} for some related results.
\end{rem}

\section{Spectral graph theory}

Let $G=(V,E)$ be a simple graph with $n$ vertices $v_1,\ldots,v_n$ and edge set $E\subseteq V\times V$.
The {\it adjacency matrix} $A(G)=(a_{ij})_{n\times n}$ of $G$ is defined by
$$a_{ij}=\left\{
\begin{array}{ll}
1, & \hbox{if $(v_i,v_j)\in E$;} \\
0, & \hbox{otherwise.}
\end{array}
\right.$$
Let $D=\diag(d_1,\ldots,d_n)$ be the {\it degree matrix} of $G$,
where $d_i=\deg v_i$.
The matrix $L(G)=D-A(G)$
is called the {\it Laplacian} matrix.
We refer the reader to
\cite{AM85,Mer95}
for further information about the Laplacian matrix.
The {\it normalized Laplacian} matrix of $G$ is defined as
$\L(G)=D^{-1/2}L(G)D^{-1/2}$
with the convention that the $i$th diagonal entry of $D^{-1}$ is $0$ if $d_i=0$.
For any vertex $v$ of $G$,
it is clear that $A(G-v)\intrz A(G)$ by the Cauchy interlacing theorem.
For any edge $e$ of $G$,
Chen et al. \cite{Chen04}
showed that
$L(G-e)\intrz L(G)$ and $\L(G-e)\comp\L(G)$
(see \cite{Li} for a short proof).

There are some similar results for signed graphs and weighted graphs.
A {\it signed graph} $G^\sigma$ consists of a simple graph $G$
and a map $\sigma: E\longrightarrow\{+1,-1\}$.
The {\it signed adjacency matrix} $A^\sigma(G)=(a_{ij}^\sigma)_{n\times n}$
is defined by $a_{ij}^\sigma=\sigma(v_i,v_j)a_{ij}$
and the degree matrix is still $D=\diag(d_1,\ldots,d_n)$.
Define the {\it signed Laplacian} $L^\sigma(G)=D-A^\sigma(G)$
and the {\it normalizer} $\L^\sigma(G)=D^{-1/2}L^\sigma(G)D^{-1/2}$.
Clearly, $L^\sigma=L$ if $\sigma\equiv 1$.
Atay and Tun\c{c}el \cite[Theorem 8]{AT14} showed that $\L^\sigma(G-e)\comp \L^\sigma(G)$ for any $e\in E$.
A {\it weighted graph} $(G,w)$ is a graph $G$ (possibly with loops)
with a nonnegative weight function $w: V\times V\longrightarrow [0,\infty)$
with $w(u,v)=w(v,u)$ and $w(u,v)>0$ if and only if there is an edge joining $u$ and $v$.
The {\it adjacency matrix} is defined by $a_{ij}=w(v_i,v_j)$.
The {\it diagonal degree} matrix is defined by $d_i=\sum_{v_j\thicksim v_i}w(v_i,v_j)$.
The Laplacian $L(G,w)$ and its normalizer $\L(G,w)$ is similarly defined as above.
We say that $(H,w_H)$ is a {\it subgraph} of $(G,w_G)$
if $H$ is a subgraph of $G$ and $w_H(e)\le w_G(e)$ for all $e\in E(H)$.
In this case, we define the weighted graph $G-H$ with the weight function $w_{G-H}=w_G-w_H$.
Let $e\in E(G)$ and $H=\{e\}$.
Butler \cite{Butler} showed that $\L(G-H,w_{G-H})\comp \L(G,w_G)$.

Recently,
Yu et al. \cite{YDEJ} considered the case of
simple directed graphs.
A {\it directed graph} $X$ consists of a finite set $V=\{v_1,\ldots,v_n\}$ of vertices
together with a subset $E\subseteq V\times V$ of ordered pairs
called {\it arcs} or {\it directed edges}.
If $(u,v)\in E$ and $(v,u)\in E$,
we say that the unordered pair $[u,v]$ is a {\it digon} of $X$.
Following \cite{LL15, GM17},
define the Hermitian adjacency matrix $H(X)=(h_{ij})_{n\times n}$ of $X$ by
\begin{equation*}\label{h}
h_{ij}=
\left\{
  \begin{array}{rl}
    1, & \hbox{if $(v_i,v_j)\in E$ and $(v_j,v_i)\in E$;} \\
    i, & \hbox{if $(v_i,v_j)\in E$ and $(v_j,v_i)\not\in E$;} \\
    -i, & \hbox{if $(v_i,v_j)\not\in E$ and $(v_j,v_i)\in E$;} \\
    0, & \hbox{otherwise.}
  \end{array}
\right.
\end{equation*}
Following \cite{YDEJ},
define the {\it Hermitian Laplacian} matrix 
$L(X)=D-H(X)$
and the {\it Hermitian normalized Laplacian} matrix 
$\L(X)=D^{-1/2}L(X)D^{-1/2}$,
where $D=\diag(d_1,\ldots,d_n)$ is the degree matrix of the corresponding undirected graph.
Yu et al. \cite[Theorem 3.6]{YDEJ} showed that $\L(X-e)\comp\L(X)$
for any arc or digon $e$ of $X$.

More recently,
Mohar \cite{Moh20} introduced a new kind of Hermitian matrix for digraphs.
Denote by $\omega=(1+i\sqrt{3})/2$ the primitive sixth root of unity
and let $\overline{\omega}$ be its conjugate.
Following Mohar \cite{Moh20},
define the Hermitian adjacency matrix $\w{H}(X)=[\w{h}_{ij}]_{n\times n}$
of the second kind of $X$ by
$$
\w{h}_{ij}=
\left\{
  \begin{array}{rl}
    1, & \hbox{if $(v_i,v_j)\in E$ and $(v_j,v_i)\in E$;} \\
    \omega, & \hbox{if $(v_i,v_j)\in E$ and $(v_j,v_i)\not\in E$;} \\
    \overline{\omega}, & \hbox{if $(v_i,v_j)\not\in E$ and $(v_j,v_i)\in E$;} \\
    0, & \hbox{otherwise.}
  \end{array}
\right.
$$
Define
the corresponding Hermitian Laplacian matrix
and Hermitian normalized Laplacian matrix by
$\w{L}(X)=D-\w{H}(X)$
and
$\w{\L}(X)=D^{-1/2}\w{L}(X)D^{-1/2}$ respectively.

Note that for the simple graph $G$ and $e\in E$, we have
$$L(G)-L(G-e)=
  \left(
  \begin{array}{rr}
    1 & -1 \\
    -1 & 1 \\
  \end{array}
  \right)\oplus 0.$$
We use this notation as short hand for
$$L(G)-L(G-e)=E_{u,u}-E_{u,v}-E_{v,u}+E_{v,v},$$
where $e=(u,v)$ and $E_{u,v}$ is the matrix with $1$ at the $(u,v)$ position and $0$ elsewhere.

For the signed graph $G^\sigma$ and the weighted graph $(G,w)$, we have
$$L^\sigma(G)-L^\sigma(G-e)=
\left(
\begin{array}{cc}
1 & -\sigma(e) \\
-\sigma(e) & 1 \\
\end{array}
\right)\oplus 0$$
and
$$L(G,w_G)-L(G-H,w_{G-H})=
  w_H(e)\left(
  \begin{array}{rr}
    1 & -1 \\
    -1 & 1 \\
  \end{array}
  \right)\oplus 0,$$
where $H=\{e\}$.
Similarly, if $X$ is a directed graph and $e$ is a digon or a directed edge of $X$,
then
$$L(X)-L(X-e)=
  \left(
  \begin{array}{rr}
    1 & -1 \\
    -1 & 1 \\
  \end{array}
  \right)\oplus 0
\textrm{ or }
  \left(
  \begin{array}{rr}
    1 & \pm i \\
    \mp i & 1 \\
  \end{array}
  \right)\oplus 0,$$
and
$$\w{L}(X)-\w{L}(X-e)=
  \left(
  \begin{array}{rr}
    1 & -1 \\
    -1 & 1 \\
  \end{array}
  \right)\oplus 0
\textrm{ or }
  \left(
  \begin{array}{rr}
    1 & \pm \omega \\
    \pm \overline{\omega} & 1 \\
  \end{array}
  \right)\oplus 0.$$

A Hermitian matrix $L=(\ell_{ij})_{n\times n}$ is called a {\it generalized Laplacian} matrix if
$\ell_{ii}\ge \sum_{j\neq i}|\ell_{ij}|$ for $i=1,\ldots,n$.
For such a matrix,
define its {\it normalizer} $\L=D^{-1/2}LD^{-1/2}$,
where $D$ is the diagonal matrix $\diag(d_{1},\ldots,d_{n})$ such that
$d_i=\ell_{ii}$ if $\ell_{ii}\neq 0$ and $d_i=0$ otherwise.
Clearly, $L(G),L^{\sigma}(G),L(G,w_G), L(X), \w{L}(X)$ are all generalized Laplacian matrices.
The following result is a common generalization of eigenvalue inequalities for
(normalized) Laplacians of simple (signed, weighted, directed) graphs.

\begin{prop}\label{dif}
Let $L_1$ and $L_2$ be two $n\times n$ generalized Laplacian matrices such that
\begin{equation}\label{dif-L}
L_1-L_2=
w\left(
  \begin{array}{rr}
    1 & c \\
    \overline{c} & 1 \\
  \end{array}
\right)\oplus 0,
\end{equation}
where $w>0, c\in\mathbb{C}$ and $|c|=1$.
Then $L_2 \intrz L_1$ and $\L_2\comp\L_1$.
\end{prop}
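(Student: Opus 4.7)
The plan is to apply Theorem \ref{cri-thm} (via Remark \ref{rem-mat}) combined with the congruence trick from the proof of Corollary \ref{A,B}. First, I set $B=L_1-L_2$; the nontrivial $2\times 2$ block of $B$ has trace $2w$ and determinant $w^2(1-|c|^2)=0$, so its eigenvalues are $2w$ and $0$. Hence $n_+(B)=1$ and $n_-(B)=0$, and Proposition~\ref{pii-} immediately gives $\la_{i+1}(L_1)\le\la_i(L_2)\le\la_i(L_1)$, i.e., $L_2\intrz L_1$.

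For $\L_2\comp\L_1$ I would invoke Remark~\ref{rem-mat}~(ii), which reduces the task to verifying $|n_+(\L_1-rI)-n_+(\L_2-rI)|\le 1$ for every $r\in\mathbb{R}$. Because both $L_j$ and $2D_j-L_j$ are Hermitian and diagonally dominant with nonnegative diagonal, each $\L_j$ is positive semi-definite with spectrum in $[0,2]$; consequently the difference already vanishes for $r\notin[0,2]$, and only the compact interval $[0,2]$ requires work.

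For $r\in[0,2]$ I would decompose $V$ into the isolated and non-isolated vertices of $L_j$. On the isolated block $\L_j-rI$ acts as the scalar $-r$ (contributing nothing to $n_+$ when $r\ge 0$), while on the non-isolated block the standard congruence $\L_j-rI\cong L_j-rD_j$ applies. Since $L_j-rD_j$ also vanishes on isolated rows, this yields $n_+(\L_j-rI)=n_+(L_j-rD_j)$. Because the $(i,i)$-entry of $D_j$ always equals $(L_j)_{ii}$, one reads off $D_1-D_2=wI_2\oplus 0$ directly from the hypothesis, giving
\[
(L_1-rD_1)-(L_2-rD_2)=w\begin{pmatrix}1-r & c\\ \overline{c} & 1-r\end{pmatrix}\oplus 0,
\]
a matrix whose eigenvalues are $w(2-r)$ and $-wr$, so its positive and negative inertia indices are each at most $1$ on $r\in[0,2]$. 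Proposition~\ref{pii+} applied in both directions then yields $|n_+(L_1-rD_1)-n_+(L_2-rD_2)|\le 1$, which is what is needed.

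The main obstacle is precisely the bookkeeping around zero-degree vertices: the naive identity $\L_j-rI=D_j^{-1/2}(L_j-rD_j)D_j^{-1/2}$ fails because $D_j^{-1/2}D_jD_j^{-1/2}$ equals the orthogonal projection onto non-isolated coordinates rather than $I_n$. The isolated/non-isolated split handles this cleanly and shows that the possibly differing isolated sets of $L_1$ and $L_2$ do not disturb the inertia comparison once $r\ge 0$, which is the only regime where the computation is not already trivial.
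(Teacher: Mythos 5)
Your proposal is correct, and the second half takes a genuinely different (and cleaner) route than the paper. The interlacing part $L_2\intrz L_1$ and the reduction of $\L_2\comp\L_1$ to the inequality $|n_+(L_1-rD_1)-n_+(L_2-rD_2)|\le 1$ for $r\in[0,2]$ coincide with the paper's argument: the paper proves exactly your two facts as Lemma~\ref{n=n} (spectrum of $\L$ in $[0,2]$, via Gershgorin rather than your diagonal-dominance of $L$ and $2D-L$, and $n_+(\L-rI)=n_+(L-rD)$ for $r\ge0$ with the same isolated/non-isolated bookkeeping, which indeed uses that a zero diagonal entry of a generalized Laplacian forces a zero row). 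Where you diverge is the final inertia comparison: the paper partitions $L_k-rD_k$ into a $2\times2$ block ${X}_k(r)$ and the common block ${Z}(r)$, takes Schur complements when $\det {Z}(r)\neq 0$, observes that the two Schur complements differ by $w(\Delta-rI)$, and then disposes of the finitely many exceptional $r$ with $\det{Z}(r)=0$ via the right-continuity argument of Remark~\ref{fin}. You instead note that $(L_1-rD_1)-(L_2-rD_2)=w(\Delta-rI)\oplus 0$ has at most one positive and one negative eigenvalue for $r\in[0,2]$ and apply Proposition~\ref{pii+} in both directions (equivalently the inertia form of Corollary~\ref{weyl}) to the full $n\times n$ matrices, which gives the bound for every $r\in[0,2]$ at once, with no nonsingularity hypothesis and no appeal to Remark~\ref{fin}. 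Your route buys simplicity and uniformity in $r$; the paper's buys nothing extra here beyond localizing the perturbation to a $2\times2$ Schur complement, so your argument is a legitimate streamlining, provided you keep the explicit remark (which you do) that $D_1-D_2=wI_2\oplus 0$ because $D_k$ is the diagonal part of $L_k$ even at vertices where the diagonal entry vanishes.
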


To prove Proposition \ref{dif}, we first establish the following result.

\begin{lem}\label{n=n}
Let $L$ be a generalized Laplacian matrix and
$\L=D^{-1/2}LD^{-1/2}$. 
Then
\begin{enumerate}
  \item[\rm (i)] the eigenvalues of $\L$ are in the interval $[0,2]$, and
  \item[\rm (ii)] $n_+(\L-rI)=n_+(L-rD)$ for $r\ge 0$.
\end{enumerate}
\end{lem}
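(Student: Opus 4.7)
The plan is to establish (i) by showing that $L$ itself and $2D-L$ are both positive semi-definite and then transferring via congruence to $\L$ and $2I - \L$; and to establish (ii) by decomposing according to whether the diagonal entries of $D$ vanish and using the congruence $L-rD = D^{1/2}(\L-rI)D^{1/2}$ on the nonzero-diagonal block.

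For (i), I would first observe that any generalized Laplacian $L$ is positive semi-definite. Since $\ell_{ii} \ge \sum_{j\neq i}|\ell_{ij}| \ge 0$ for all $i$, the Gershgorin Circle Theorem places every eigenvalue of $L$ in $[0, 2\ell_{ii}]$; so $n_-(L)=0$. Next, I would note that $2D - L$ is again a generalized Laplacian: in the rows where $\ell_{ii}\neq 0$ the diagonal entry $(2D-L)_{ii} = 2\ell_{ii}-\ell_{ii}=\ell_{ii}$ while the off-diagonal entries are $-\ell_{ij}$ of the same modulus; in the rows where $\ell_{ii}=0$ the whole row of $L$ vanishes (by diagonal dominance) and so does the corresponding row of $2D-L$. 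Hence $2D-L\succeq 0$ as well. The issue is then to pass from $L,\ 2D-L$ to $\L,\ 2I-\L$, which is not a straightforward congruence because $D^{-1/2}$ has a zero entry where $d_i=0$. I would resolve this by splitting the index set into $S=\{i:d_i>0\}$ and its complement: for $i\notin S$ the $i$-th row and column of $L$ are identically zero and the $i$-th row and column of $\L$ are zero by definition, so both matrices split as $M_S\oplus 0$. On the $S$-block the matrix $D_S$ is invertible and the honest congruence $\L_S = D_S^{-1/2}L_S D_S^{-1/2}$ and $2I_S-\L_S = D_S^{-1/2}(2D_S-L_S)D_S^{-1/2}$ applies, giving $\L_S\succeq 0$ and $2I_S-\L_S\succeq 0$ by Sylvester's Law of Inertia, and hence eigenvalues of $\L$ lie in $[0,2]$.

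For (ii), I would reuse the same block decomposition. For $r\ge 0$,
\[
\L - rI \;=\; (\L_S - rI_S) \oplus (-rI_{S^c}), \qquad L - rD \;=\; (L_S - rD_S) \oplus 0.
\]
The second summand $-rI_{S^c}$ has no positive eigenvalue (it is $\le 0$ when $r\ge 0$), and the zero block in $L-rD$ contributes no positive eigenvalue either. On the $S$-block, since $D_S^{1/2}$ is positive definite and invertible,
\[
L_S - rD_S \;=\; D_S^{1/2}(\L_S - rI_S)D_S^{1/2},
\]
which is a congruence; by Sylvester's Law of Inertia the two matrices have the same positive inertia. Combining the two blocks gives $n_+(\L - rI) = n_+(\L_S - rI_S) = n_+(L_S - rD_S) = n_+(L-rD)$.

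The main obstacle I expect is the bookkeeping around the convention $d_i^{-1/2}=0$ when $d_i=0$, which prevents treating $D^{-1/2}$ as an invertible matrix globally and hence blocks a one-line congruence argument. The cleanest way around it, as sketched above, is to isolate the degenerate coordinates in a separate block where the contribution to $n_+$ is transparent, and to perform the congruence only on the nondegenerate block. Everything else — positive semi-definiteness of $L$ and $2D-L$, and the congruence $L_S - rD_S = D_S^{1/2}(\L_S - rI_S)D_S^{1/2}$ — is then routine.
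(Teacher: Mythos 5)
Your proof is correct and follows essentially the same route as the paper: isolate the indices with zero diagonal entry, observe that both $L$ and $\L$ vanish there, and then use the congruence $L_S-rD_S=D_S^{1/2}(\L_S-rI_S)D_S^{1/2}$ together with Sylvester's law of inertia on the nondegenerate block. The only cosmetic difference is in part (i), where the paper applies the Gershgorin circle theorem to $D^{-1}L$ (which is similar to $\L$ on that block), whereas you apply it to $L$ and to $2D-L$ and transfer the bounds to $\L$ by congruence; the ingredients are the same.
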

\begin{proof}
Clearly,
if the $i$th diagonal entry of $L$ is $0$,
then all entries in the $i$th row (column) of $L$ are $0$.
Assume now that $L$ has precisely $k$ zero diagonal entries.
Delete the corresponding $k$ (zero) rows and $k$ (zero) columns of $L$ ($D$, $\L$, resp.)
to obtain $\ol{L}$ ($\ol{D}$, $\ol{\L}$, resp.).
Then $\ol{L}$ is a generalized Laplacian matrix with nonzero diagonal entry
and $\ol{\L}=\ol{D}^{~{-1/2}}\ol{L}~\ol{D}^{~{-1/2}}$.

(i)\quad
It suffices to consider the case that all diagonal entries of $L$ are nonzero.
In this case,
$\L$ and $D^{-1}L$ have the same eigenvalues since they are similar: $D^{-1}L=D^{-1/2} \L D^{1/2}$.
Note that the diagonal entries of $D^{-1}L$ are all $1$.
Hence the eigenvalues of $D^{-1}L$ satisfy $|\la-1|\le 1$
by the Gershgorin circle theorem
(see \cite[Theorem 6.1.1]{HJ13} for instance),
i.e., $\la \in [0,2]$.
Thus eigenvalues of $\L$ are all in $[0,2]$.

(ii)\quad
Note that the characteristic polynomials
$$\det (\la I-(L-rD))=\la^k\det (\la I-(\ol{L}-r\ol{D}))$$
and
$$\det (\la I-(\L-rI))=(\la+r)^k\det (\la I-(\ol{\L}-rI)).$$
Hence $n_+(L-rD)=n_+(\ol{L}-r\ol{D})$ and
$n_+(\L-rI)=n_+(\ol{\L}-rI)$ for $r\ge 0$.
On the other hand,
$$\ol{\L}-rI=\ol{D}^{\ -1/2}(\ol{L}-r\ol{D}))\ol{D}^{\ -1/2},$$
and so $n_+(\ol{\L}-rI)=n_+(\ol{L}-r\ol{D})$.
Thus we conclude that $n_+(\L-rI)=n_+(L-rD)$ for $r\ge 0$.
\end{proof}

We are now in a position to prove Proposition \ref{dif}.

\begin{proof}[Proof of Proposition \ref{dif}]
Denote $\Delta=\left(
  \begin{array}{rr}
    1 & c \\
    c^* & 1 \\
  \end{array}
\right)$.
Clearly, two eigenvalues of $\Delta$ are $0$ and $2$.
Thus $n_+(L_1-L_2)=n_+(\Delta)=1$ and $n_-(L_1-L_2)=n_-(\Delta)=0$ by \eqref{dif-L},
and therefore $L_2\intrz L_1$ by Corollary \ref{weyl}.

For $k=1$ and $2$,
let $\L_k=D_k^{-1/2}L_kD_k^{-1/2}$ be the normalized Laplacians corresponding to $L_k$.
We next prove that $\L_1\comp \L_2$.
By Remark~\ref{rem-mat}~(ii),
it suffices to prove that
\begin{equation}\label{cL-D}
|n_+(\L_1-rI)-n_+(\L_2-rI)|\le 1
\end{equation}
for any $r\in\mathbb{R}$.
Recall that eigenvalues of $\L_k$ are in the interval $[0,2]$,
hence $n_+(\L_k-rI)\equiv n$ for $r<0$ and $n_+(\L_k-rI)\equiv 0$ for $r>2$,
and the inequality \eqref{cL-D} is obviously true unless $r\in[0,2]$.
So it remains to consider the case $r\in[0,2]$.
By Lemma~\ref{n=n}~(ii), it suffices to prove that
\begin{equation}\label{L-D}
|n_+(L_1-rD_1)-n_+(L_2-rD_2)|\le 1
\end{equation}
for $r\in[0,2]$.

By the condition \eqref{dif-L},
we may assume that
$$L_k=
\left(\begin{array}{cc}
X_k & Y \\
Y^* & Z \\
\end{array}\right),\qquad k=1,2,$$
where $X_k$ are $2\times 2$ matrices and
$X_1-X_2=w\Delta$.
Let
$$L_k-rD_k=
\left(\begin{array}{cc}
{X}_k(r) & Y \\
Y^* & {Z}(r) \\
\end{array}\right).$$
Then $X_1-X_2=w\Delta$ implies that
${X}_1(r)-{X}_2(r)=w(\Delta-rI)$.
When $\det({Z}(r))\neq 0$,
we have
$$L_k-rD_k
\cong
\left(\begin{array}{cc}
{X}_k(r)-YZ^{-1}(r)Y^* & 0 \\
0 & {Z}(r) \\
\end{array}\right).$$
Clearly, $n_+(L_k-rD_k)=n_+({X}_k(r)-YZ^{-1}(r)Y^*)+n_+({Z}(r))$.
Thus
\begin{equation}\label{n-n}
n_+(L_1-rD_1)-n_+(L_2-rD_2)=n_+({X}_1(r)-YZ^{-1}(r)Y^*)-n_+({X}_2(r)-YZ^{-1}(r)Y^*).
\end{equation}
For two $2\times 2$ matrices ${X}_1(r)-YZ^{-1}(r)Y^*$ and ${X}_2(r)-YZ^{-1}(r)Y^*$,
their difference
$$({X}_1(r)-YZ^{-1}(r)Y^*)-({X}_2(r)-YZ^{-1}(r)Y^*)={X}_1(r)-{X}_2(r)=w(\Delta-rI),$$
which is indefinite for $r\in [0,2]$.
It must be
$$|n_+({X}_1(r)-YZ^{-1}(r)Y^*)-n_+({X}_2(r)-YZ^{-1}(r)Y^*)|\le 1.$$
It follows from \eqref{n-n} that \eqref{L-D} holds when $r\in [0,2]$ and $\det({Z}(r))\neq 0$,
and so that \eqref{cL-D} holds when $\det({Z}(r))\neq 0$.
Clearly, there are only finite real numbers $r\in [0,2]$ such that $\det({Z}(r))=0$.
Hence \eqref{cL-D} holds for all $r\in\mathbb{R}$ by Remark \ref{fin}, as required.
The proof is complete.
\end{proof}

Applying Proposition \ref{dif} to
the Hermitian Laplacian matrix
and Hermitian normalized Laplacian matrix of the second kind of a digraph,
we obtain the following result about Mohar's new kind of Hermitian matrices for digraphs.

\begin{coro}
Let $e$ be an arc or a digon of a digraph $X$.
Then $\w{L}(X-e)\intrz\w{L}(X)$ and $\w{\L}(X-e)\comp\w{\L}(X)$.
\end{coro}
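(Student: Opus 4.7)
The plan is to recognize this corollary as an immediate application of Proposition~\ref{dif}, taking $L_1=\w{L}(X)$ and $L_2=\w{L}(X-e)$. The conclusion $\w{L}(X-e)\intrz\w{L}(X)$ and $\w{\L}(X-e)\comp\w{\L}(X)$ then follows directly from the proposition, so the only work is to verify its two hypotheses.

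First I would check that $\w{L}(X)$ and $\w{L}(X-e)$ are both generalized Laplacian matrices. For any vertex $v_i$ of a digraph $Y$, the diagonal entry of $\w{L}(Y)$ is $d_i$, the degree of $v_i$ in the underlying undirected graph, while each nonzero off-diagonal entry $-\w{h}_{ij}$ in row $i$ has modulus $1$ (since $|1|=|\omega|=|\overline{\omega}|=1$, recalling that $\omega=(1+i\sqrt{3})/2$ is a sixth root of unity). The number of such nonzero entries equals $d_i$, so $\ell_{ii}=\sum_{j\neq i}|\ell_{ij}|$ and the generalized Laplacian condition holds with equality, both for $X$ and for $X-e$.

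Next I would use the difference formulas already displayed in the paper. If $e$ is a digon, then
$$\w{L}(X)-\w{L}(X-e)=\left(\begin{array}{rr} 1 & -1 \\ -1 & 1 \end{array}\right)\oplus 0,$$
and if $e$ is a (non-digon) directed arc, then
$$\w{L}(X)-\w{L}(X-e)=\left(\begin{array}{rr} 1 & \pm\omega \\ \pm\overline{\omega} & 1 \end{array}\right)\oplus 0.$$
In either case this fits the template
$w\bigl(\begin{smallmatrix} 1 & c \\ \overline{c} & 1 \end{smallmatrix}\bigr)\oplus 0$ of Proposition~\ref{dif} with $w=1$ and $c\in\{-1,\pm\omega\}$, all of modulus $1$ (and with $\overline{-\omega}=-\overline{\omega}$, which handles the sign choice). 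Invoking Proposition~\ref{dif} then yields both interlacing claims.

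I do not anticipate any real obstacle, since once the difference matrix is put in the canonical form above, the corollary is essentially a bookkeeping exercise. The only mildly substantive point is noticing that the entries $\pm\omega$ arising in Mohar's convention have modulus $1$ in exactly the way demanded by Proposition~\ref{dif}; this is what makes the proposition a genuine common generalization covering Mohar's second-kind matrices alongside the classical, signed, weighted, and first-kind directed cases.
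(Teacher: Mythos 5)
Your proposal is correct and follows exactly the paper's route: the paper obtains this corollary by applying Proposition~\ref{dif} with the displayed difference formula $\w{L}(X)-\w{L}(X-e)=\bigl(\begin{smallmatrix}1 & c\\ \overline{c} & 1\end{smallmatrix}\bigr)\oplus 0$, $c\in\{-1,\pm\omega\}$, together with the observation that $\w{L}(X)$ and $\w{L}(X-e)$ are generalized Laplacians. Your explicit verification of the modulus-one condition and the generalized Laplacian hypothesis is just the bookkeeping the paper leaves implicit.
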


\section{Remarks}

Let $f(x)$ be a real-rooted polynomial.
For $r\in\mathbb{R}$,
let $\n(f,r)$ be the number of roots of $f$ in the interval $(r,+\infty)$
and denote $f_r(x)=f(x+r)$.
Then $\n(f,r)=n_+(f_r)$.
Parallel to Theorem \ref{cri-thm}, we have the following result.

\begin{prop}\label{fgm}
Let $f,g$ be two real-rooted polynomials and $m\in\mathbb{Z}$.
Then $r_{i+m}(g)\le r_{i}(f)$ for all $i$ if and only if
$\n(g,r)-\n(f,r)\le m$ for any $r\in\mathbb{R}$.
\end{prop}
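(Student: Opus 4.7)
The plan is to reduce this directly to Theorem \ref{cri-thm} by realizing the polynomials as characteristic polynomials of diagonal Hermitian matrices. Given a real-rooted polynomial $f$ of degree $n$, let $A=\diag(r_1(f),\ldots,r_n(f))$, and similarly let $B=\diag(r_1(g),\ldots,r_{n'}(g))$ where $n'=\deg g$. Then $A,B$ are Hermitian with $\la_i(A)=r_i(f)$ and $\la_i(B)=r_i(g)$ under the standard $\pm\infty$ convention for out-of-range indices. For each $r\in\mathbb{R}$, the matrix $A-rI$ is diagonal with entries $r_j(f)-r$, so $n_+(A-rI)$ counts exactly those $j$ with $r_j(f)>r$, which by definition equals $\n(f,r)$; likewise $n_+(B-rI)=\n(g,r)$. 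Under this translation, the hypothesis $r_{i+m}(g)\le r_i(f)$ for all $i$ becomes $\la_{i+m}(B)\le\la_i(A)$ for all $i$, and the hypothesis $\n(g,r)-\n(f,r)\le m$ becomes $n_+(B-rI)-n_+(A-rI)\le m$. So Theorem \ref{cri-thm} yields both implications at once.

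If one prefers a self-contained argument, the proof template of Theorem \ref{cri-thm} transports verbatim. For the forward direction, assume $r_{i+m}(g)\le r_i(f)$ for all $i$, fix $r\in\mathbb{R}$, and set $p=\n(f,r)$; then by definition $r_{p+1}(f)\le r$, so $r_{(p+1)+m}(g)\le r_{p+1}(f)\le r$, giving $\n(g,r)\le p+m$. For the converse, if $r_{i_0+m}(g)>r_{i_0}(f)$ for some $i_0$, pick any $r_0\in(r_{i_0}(f),r_{i_0+m}(g))$; then $\n(f,r_0)\le i_0-1$ and $\n(g,r_0)\ge i_0+m$, contradicting the hypothesis.

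The only subtlety, and the main (minor) obstacle, is bookkeeping with the $\pm\infty$ convention when $\deg f\ne\deg g$ or when the index $i+m$ falls outside $\{1,\ldots,\deg g\}$: one must check that the inequality $r_{i+m}(g)\le r_i(f)$ remains correctly interpreted (it is automatic whenever either side is $\pm\infty$ in the favorable direction), and that this same convention is the one used implicitly in Theorem \ref{cri-thm} for eigenvalues of $A$ and $B$ when their sizes differ. Once this is noted, no further work is required.
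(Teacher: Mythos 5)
Your proof is correct. The paper itself gives no explicit argument for Proposition \ref{fgm}: it simply states the result as ``parallel to Theorem \ref{cri-thm}'', and your second paragraph --- transporting the proof of Theorem \ref{cri-thm} verbatim, with $\n(\cdot,r)$ playing the role of $n_+(\cdot-rI)$ --- is exactly that intended parallel argument. Your first reduction, realizing $f$ and $g$ as characteristic polynomials of the diagonal matrices $\diag(r_1(f),\ldots,r_n(f))$ and $\diag(r_1(g),\ldots,r_{n'}(g))$ so that the proposition becomes a literal corollary of Theorem \ref{cri-thm}, is a slightly different and arguably cleaner route, since it avoids repeating the argument; the only thing it costs is the bookkeeping you already flag, namely checking that Theorem \ref{cri-thm} and its proof make sense for $A$ and $B$ of different sizes under the convention $\la_i=+\infty$ for $i<1$ and $\la_i=-\infty$ for $i>n$, which indeed they do (the proof only ever uses inequalities of the form $\la_{p+1}\le 0$, which hold vacuously for out-of-range indices). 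Either version is a complete proof.
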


In particular,
$f\intrz g$ if and only if $0\le\n(g,r))-\n(f,r)\le 1$ for any $r\in\mathbb{R}$,
and $f\comp g$ if and only if $|\n(g,r))-\n(f,r)|\le 1$ for any $r\in\mathbb{R}$.
There are two closely related results:
$f\intrz g$ or $g\intrz f$ if and only if $af(x)+bg(x)$ is real-rooted for any $a,b\in\mathbb{R}$,
and $f\comp g$ if and only if $af(x)+bg(x)$ is real-rooted for any $a,b\in\mathbb{R}^+$
(see \cite{CS07} for more information).
Using such a characterization of interlacing polynomials,
Fisk \cite{Fis05} gave a very short proof of the Cauchy interlacing theorem.

Fan and Pall \cite[Theorem 1]{FP57} established the converse of the inclusion principle:
Let $f$ and $g$ be two monic real-rooted polynomials
satisfying $\deg f=\deg g+p$ and $r_{i+p}(f)\le r_i(g)\le r_i(f)$.
Then there is one Hermitian matrix
$A=\left[
   \begin{array}{cc}
     B & C \\
     C^* & D \\
   \end{array}
 \right]$
such that the characteristic polynomials of $A$ and $B$ are $f$ and $g$ respectively.
Wang and Zheng \cite[Theorem 1.3]{WZ19} recently established
a converse of Corollary \ref{weyl}:
Let $f$ and $g$ be two monic real-rooted polynomials with the same degree
satisfying $r_{i+q}(f)\le r_i(g)\le r_{i-p}(f)$.
Then there exist two Hermitian matrices $A$ and $B$
whose characteristic polynomials are $f$ and $g$ respectively,
such that $n_+(B-A)\le p$ and $n_-(B-A)\le q$.
These converse results can be proved by means of Proposition \ref{fgm}.
\section*{Acknowledgement}
This paper has been accepted by the editorial board for publication in Linear and Multilinear Algebra.

The authors thank the anonymous referee for his/her careful reading and valuable suggestions.
This work was supported partially by the National Natural Science Foundation of China (Nos. 11601062, 11771065, 11871304),
the Natural Science Foundation of Shandong Province of China (No. ZR2017MA025),
the Fundamental Research Funds for the Central Universities (No. DUT18RC(4)068),
and the Young Talents Invitation Program of Shandong Province.
\bibliographystyle{amsplain}

\end{document}